\theoremstyle{plain}
\newtheorem{theorem}{Theorem}[section]
\newtheorem{corollary}[theorem]{Corollary}
\newtheorem{lemma}[theorem]{Lemma}}
\theoremstyle{definition}
\newtheorem{definition}[theorem]{Definition}
\theoremstyle{remark}
\newtheorem{remark}[theorem]{Remark}}
\newcommand{\aut}{\operatorname{Aut}}
\newcommand{\conj}{\operatorname{conj}}
\newcommand{\sign}{\operatorname{sign}}
\newcommand{\sou}{\operatorname{source}}
\newcommand{\targ}{\operatorname{target}}
\newcommand{\even}{\operatorname{even}}
\newcommand{\odd}{\operatorname{odd}}
\newcommand{\ord}{\operatorname{ord}}
\newcommand{\cl}{\mathcal{C}}
\newcommand{\rb}{\mathbb{R}}
\newcommand{~}{\quad}
\newcommand{\cb}{\mathbb{C}}
\newcommand{\vep}{\varepsilon}
\definecolor{cardinalred}{RGB}{140,21,21}
\definecolor{coolgray}{RGB}{77,79,83}
\definecolor{black}{RGB}{0,0,0}
\definecolor{beige}{RGB}{210,194,149}
\definecolor{darkbeige}{RGB}{179,153,93}
\definecolor{darkcardinal}{RGB}{94,48,50}
\definecolor{lightcardinal}{RGB}{141,60,30}
\definecolor{darkpurple}{RGB}{83,40,79}
\definecolor{darkcyan}{RGB}{0,124,146}
\definecolor{skyblue}{RGB}{0,152,219}
\definecolor{seablue}{RGB}{10,100,180}
\definecolor{darkblue}{RGB}{20,80,150}
\definecolor{treegreen}{RGB}{0,155,118}
\definecolor{darkorange}{RGB}{168,101,12}
\definecolor{beigegray}{RGB}{95,87,79}
\definecolor{boxgray}{RGB}{238,235,233}
\definecolor{footergray}{RGB}{199,209,197}
\title[Real polynomial Hurwitz numbers]{Real polynomial Hurwitz numbers}
\author{Yanqiao Ding}
\address{School of Mathematics and Statistics, Zhengzhou University, Zhengzhou, 450001, China}
\email{yqding@zzu.edu.cn}
\subjclass[2020]{Primary 14N10; Secondary 14P05, 14H30}
\keywords{Real enumerative geometry, polynomial Hurwitz numbers, signed counts.}
\date{\today}
\begin{document}

\maketitle

\begin{abstract}
    We show that the signed counts of normalized real polynomials, as defined by Itenberg and Zvonkine,
    provide the signed counts of genus zero real ramified coverings of the Riemann sphere with a point of total ramification
    and several other branch points with arbitrary ramification profiles.
\end{abstract}

\tableofcontents

\section{Introduction}
Real enumerative geometry has experienced rapid development since Welschinger's seminal work \cite{wel2005a,wel2005b}.
Where he defined integers that signed count real rational curves of fixed degree passing through prescribed points in ambient varieties of complex dimension $2$ or $3$.
These integers are independent of point positions and are known as \textit{Welschinger invariants}.
Mikhalkin made a significant discovery that the Gromov-Witten invariants and Welschinger invariants of toric surfaces can be computed by weighted counts of tropical curves with different multiplicities in \cite{mikhalkin05}.
Inspired by Welschinger invariants, the real Gromov-Witten invariants were defined \cite{gz2018},
which virtually count real curves of higher genus in some varieties of higher dimension.
The question considered in this note is whether there exists an analogue of Welschinger invariants
when the ambient variety is specifically an algebraic curve.

Hurwitz numbers are analogues of Gromov-Witten invariants when the ambient space is the projective line $\cb P^1$.
Hurwitz numbers enumerate the ramified coverings of $\cb P^1$ by genus $g$ Riemann surfaces with prescribed ramification profiles at certain branch points.
A \textit{real structure} on a Riemann surface $\Sigma$ is an anti-holomorphic involution $\tau$. 
A ramified covering $\pi:(\Sigma,\tau)\to(\cb P^1,\conj)$ is \textit{real} if it preserves the real structures, that is, $\pi\circ\tau=\conj\circ\pi$,
where $\conj$ is the complex conjugation on $\cb P^1$.
The real Hurwitz numbers are the counterpart of the Hurwitz numbers that enumerate real ramified coverings of $\cb P^1$.
These numbers usually depend on the positions of the real branch points.
Similar to the computation of Welschinger invariants, tropical geometry is used to compute real Hurwitz numbers \cite{bbm-2011,mr-2015,gpmr-2015},
and to construct lower bounds for real double Hurwitz numbers \cite{rau2019,d-2020,ding-2024,DLLY-2024}. 
Note that these lower bounds are based on the tropical computation in \cite{mr-2015}, 
not on an invariant signed count such as the Welschinger invariants.
Finding a Welschinger-type signed count definition for real Hurwitz numbers is highly worthwhile.

Itenberg and Zvonkine introduced the \textit{$s$-numbers}, 
which are signed counts of normalized real polynomials with given branch points and ramification profiles \cite{iz-2018}.
They showed that the $s$-numbers are independent of the positions of the branch points. 
The $s$-numbers were generalized to the case of real simple rational functions in \cite{er-2019}.
It is well known that counting normalized polynomials with given branch points and ramification profiles is
equivalent to computing the number of ramified coverings of the sphere by the sphere with a point of total ramification and several other branch points
with given ramification profiles.
In contrast to the complex case, the difference between counting real ramified coverings of the sphere
and normalized real polynomials is more subtle,
because a real polynomial can be normalized through a real change of variables in $0$, $1$ or $2$ ways,
depending on the parity of its degree and the sign of its leading coefficient \cite[Remark 1.8]{iz-2018}.
Hence, the signed counts of real normalized polynomials do not imply signed counts of real ramified coverings automatically.

In this note, we present an observation (see Theorem \ref{thm:main}):
the signed counts of normalized real polynomials defined by Itenberg and Zvonkine
actually provide signed counts of real ramified coverings of the sphere with a fully ramified branch point and several other branch points with arbitrary ramification profiles.
We call these signed counts of real ramified coverings the 
\textit{real polynomial Hurwitz numbers}.
The real polynomial Hurwitz numbers provide a signed count definition for genus zero one-part real double Hurwitz numbers. 
Their generating series exhibit polynomiality (see Corollary \ref{cor:gen-series}).
As the degree tends to infinity and only simple branch points are added, 
the signed counting defined genus zero one-part real double Hurwitz numbers are logarithmically equivalent
to the complex genus zero one-part double Hurwitz numbers (see Corollary \ref{cor:asymptotics}).

\section{Proof of the main results}

\subsection{Polynomial Hurwitz numbers and $s$-numbers}  
We recall the definitions of polynomial Hurwitz numbers and the $s$-numbers of normalized real polynomials. 
The readers may refer to \cite{cm-2016,iz-2018} for more details.

\begin{definition}
\label{def:pHur}
The \textit{polynomial Hurwitz numbers} are defined as
\begin{equation}\label{eq:pHur}
H(\lambda_1,\ldots,\lambda_k):=\sum_{[\pi]}\frac{1}{|\aut(\pi)|},
\end{equation}
where the sum runs over isomorphism classes of degree $d$ holomorphic ramified coverings
$\pi:S\to\cb P^1$ satisfying the following conditions.
\begin{enumerate}
\item $S$ is a connected Riemann sphere.
\item The map $\pi$ is fully ramified over $\infty$, and is ramified over $w_1,\ldots,w_k$ with ramification profiles $\lambda_1,\ldots,\lambda_k$, respectively.
Here, $w_1,\ldots,w_k$ are $k$ points in $\cb P^1$.
\item The map $\pi$ is not ramified elsewhere.
\item $\lambda_1,\ldots,\lambda_k$ are $k$ partitions of $d$ such that
$\sum\limits_{i=1}^kl(\lambda_i)=(k-1)d+1$.
\end{enumerate}
Here, two ramified coverings $\pi_1:S_1\to\cb P^1$ and $\pi_2:S_2\to\cb P^1$ are
isomorphic if there exists a biholomorphic map $\varphi:S_1\to S_2$ such that $\pi_1=\pi_2\circ\varphi$. Note that $\aut(\pi)$ denotes the automorphism group of the ramified covering map $\pi:S\to\cb P^1$.
\end{definition}
The number $H(\lambda_1,\ldots,\lambda_k)$ in equation (\ref{eq:pHur}) is not dependent on the positions of the branch points $w_1,\ldots,w_k$.
A polynomial $P\in\cb[x]$ is \textit{normalized}, if it has the form $P(z)=z^n+a_2z^{n-2}+\cdots+a_n$, where $a_i\in\cb$.
It is well known that the number $H(\lambda_1,\ldots,\lambda_k)$ equivalently counts the number of normalized polynomials with $k$ branch points such that the ramification profiles are $\lambda_1,\ldots,\lambda_k$, respectively.
When the branch points $w_1,\ldots,w_k$ are contained in $\rb P^1\subset\cb P^1$,
the real analogue of $H(\lambda_1,\ldots,\lambda_k)$ depends on the positions of $w_1,\ldots,w_k$.


\begin{definition}[{\cite[Definition 1.1-1.2]{iz-2018}}]
\label{def:disorder}
Let $P\in\rb[x]$ be a normalized real polynomial.
A pair of real numbers $x_1<x_2$ is called a \textit{disorder} of $P$ at $w$
if $P(x_1)=P(x_2)=w$ and the ramification order of $P$ at $x_1$ is greater than that at $x_2$.
Let $t(P)$ be the total number of disorders in $P$.
The \textit{sign} $\vep(P)$ of $P$ is defined to be $\vep(P)=(-1)^{t(P)}$.
\end{definition}

Let $k,d$ be two positive integers with $k<d$. Let $w_1<\ldots<w_k$ be a sequence of pairwise distinct real numbers.
Choose a sequence of partitions $\lambda_1,\ldots,\lambda_k$ of $d$ such that 
$$
\sum\limits_{i=1}^kl(\lambda_i)=(k-1)d+1.
$$
In the following of this note, we always assume that $k,d,w_1,\ldots,w_k$ and $\lambda_1,\ldots,\lambda_k$ satisfy the above assumption.

\begin{definition}[{\cite[Definition 1.3]{iz-2018}}]
\label{def:s-number}
Let $S_{\lambda_1,\ldots,\lambda_k}(w_1,\ldots,w_k)$ denote the set of real normalized polynomials $P$ that ramify at $w_1,\ldots,w_k$
with ramification profiles $\lambda_1,\ldots,\lambda_k$, respectively.
The \textit{$s$-number} of real polynomials with ramification profiles $\lambda_1,\ldots,\lambda_k$ is the sum
\begin{equation}\label{eq:s-number}
s(\lambda_1,\ldots,\lambda_k):=\sum_{P\in S_{\lambda_1,\ldots,\lambda_k}(w_1,\ldots,w_k)}\vep(P).
\end{equation}
\end{definition}
From \cite[Theorem 1]{iz-2018}, the $s$-number does not depend on the order of the real numbers $w_1,\ldots,w_k$ in the real line,
but only on the partitions $\lambda_1,\ldots,\lambda_k$.

\subsection{Real polynomial Hurwitz numbers}
In this section, we introduce the signed counts of genus zero real ramified coverings of the Riemann sphere with a fully ramified branch point
and several other branch points with prescribed ramification types.

Let $P,Q\in\rb[x]$ be two real polynomials of degree $d$.
If there exists a real affine transformation $\varphi:x\mapsto ax+b$, with $a\in\rb^*$ and $b\in\rb$,
such that $P=Q\circ\varphi$, the two real polynomials $P$ and $Q$ are said \textit{real isomorphic}.
Denote by $[P]_\rb$ the real isomorphism class of the real polynomial $P$.
Let $\aut^\rb(P)$ denote the real automorphism group of $P$. 

\begin{lemma}\label{lem:nom-rep}
Let $P(x)=a_dx^d+a_{d-1}x^{d-1}+\cdots+a_0$ be a real polynomial of degree $d$.
\begin{enumerate}
\item When $d$ is odd, there is a unique normalized real polynomial $Q$ in $[P]_\rb$ and $\aut^\rb(Q)$ is trivial.
\item When $d$ is even and $a_d>0$, there is $1$ or $2$ normalized real polynomials in $[P]_\rb$.
\begin{itemize}
\item If there is one normalized real polynomial $Q$ in $[P]_\rb$, we have $|\aut^\rb(Q)|=2$.
\item If there are two normalized real polynomials $Q_1,Q_2$ in $[P]_\rb$, we obtain that $\aut^\rb(Q_1)$ and $\aut^\rb(Q_2)$ are both trivial.
\end{itemize}
\item When $d$ is even and $a_d<0$, there is $1$ or $2$ normalized real polynomials in $[-P]_\rb$.
\begin{itemize}
\item If there is one normalized real polynomial $Q$ in $[-P]_\rb$, we have $|\aut^\rb(Q)|=2$.
\item If there are two normalized real polynomials $Q_1,Q_2$ in $[-P]_\rb$, we obtain that $\aut^\rb(Q_1)$ and $\aut^\rb(Q_2)$ are both trivial.
\end{itemize}
\end{enumerate}
\end{lemma}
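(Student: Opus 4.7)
The plan is to translate each case into explicit coefficient equations for the affine change of variable $\varphi(x)=ax+b$. Writing an arbitrary normalized $Q(y)=y^d+q_2 y^{d-2}+\cdots+q_d$ and expanding $Q(ax+b)$, only $(ax+b)^d$ contributes to the top two coefficients of $P(x)=Q(ax+b)$, yielding
$$a^d=a_d,\qquad d\,a^{d-1}b=a_{d-1}.$$
Once $(a,b)$ is fixed, the remaining coefficients of $Q$ are uniquely determined from those of $P$ by a triangular linear system. Hence the count of normalized representatives in $[P]_\rb$ equals the number of real solutions $(a,b)\in\rb^*\times\rb$ of this pair of equations.

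The three counts follow at once. For odd $d$, $a^d=a_d$ has a unique real root, producing one $(a,b)$ and one $Q$. For even $d$ with $a_d>0$, there are two real roots $a=\pm a_d^{1/d}$, and the two resulting solutions are $(a,b)$ and $(-a,-b)$; a direct substitution shows the associated normalized polynomials $Q_1,Q_2$ satisfy $Q_2(y)=Q_1(-y)$. For even $d$ with $a_d<0$, the leading-coefficient equation has no real solution, so no normalized representative lies in $[P]_\rb$; passing to $-P$ (whose leading coefficient is positive) reduces this case to the previous one.

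I would then compute $\aut^\rb(Q)$ by imposing $Q\circ\varphi=Q$ in the same framework: the analogous coefficient conditions become $a^d=1$ and $d\,a^{d-1}b=0$. For odd $d$ this forces $a=1$, $b=0$, so $\aut^\rb(Q)=\{\id\}$. For even $d$ it forces $a=\pm 1$ and $b=0$, so $\aut^\rb(Q)$ is either $\{\id\}$ or $\{\id,\;y\mapsto -y\}$, the nontrivial element appearing exactly when $Q(-y)=Q(y)$, that is, when $Q$ is an even polynomial.

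Combining the two analyses finishes the lemma. In the even-degree cases (2) and (3), a single normalized representative $Q$ occurs exactly when $Q_1=Q_2$, which by the relation $Q_2(y)=Q_1(-y)$ happens exactly when $Q_1$ is even, which by the automorphism computation happens exactly when $|\aut^\rb(Q_1)|=2$; two distinct representatives occur exactly when neither is even, giving trivial automorphism groups on both. There is no serious technical obstacle, only the bookkeeping that matches \emph{one representative carrying a $\zb/2$ symmetry} to \emph{two asymmetric representatives}, which is precisely the $Q_1\leftrightarrow Q_1(-\cdot)$ dichotomy extracted above.
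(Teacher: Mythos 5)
Your proposal is correct and follows essentially the same route as the paper: the paper likewise exhibits the explicit normalizing affine transformation(s) (unique for odd $d$, the pair $z\mapsto\pm a_d^{-1/d}z-\tfrac{a_{d-1}}{da_d}$ for even $d$ with $a_d>0$, reduction to $-P$ when $a_d<0$) and resolves the $1$-versus-$2$ dichotomy by observing that the two normalizations coincide exactly when the odd-degree coefficients of $Q$ vanish, i.e.\ when $z\mapsto -z$ is a real automorphism. Your coefficient-equation bookkeeping and the identification $Q_2(y)=Q_1(-y)$ are just a slightly more systematic presentation of the same argument.
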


\begin{proof}
When $d$ is odd, the polynomial $P$ can be normalized by a unique affine transformation
$z\mapsto\sign(a_d)\left(\frac{1}{|a_d|}\right)^{\frac{1}{d}}z-\frac{a_{d-1}}{da_d}$. Hence,
there is a unique normalized real polynomial $Q$ in $[P]_\rb$ and $\aut^\rb(Q)$ is trivial.

When $d$ is even and $a_d>0$, the polynomial $P$ can be normalized by two affine transformations
$z\mapsto\pm\left(\frac{1}{a_d}\right)^{\frac{1}{d}}z-\frac{a_{d-1}}{da_d}$.
Suppose that $P$ is normalized to $Q_1$ and $Q_2$. If $Q_1\neq Q_2$,
there are two normalized real polynomials in $[P]_\rb$ and $|\aut^\rb(Q_1)|=|\aut^\rb(Q_1)|=1$.
If $Q_1=Q_2=Q$, the coefficients of the odd degree monomials in $Q$ are all zero.
There is only one normalized real polynomial in the set $[P]_\rb$ and $|\aut^\rb(Q)|=2$.
In the case that $d$ is even and $a_d<0$, the above argument also implies the statement $(3)$.
\end{proof}

A holomorphic function $P:\cb P^1_{\sou}\to\cb P^1_{\targ}$ is \textit{polynomial}, if $P^{-1}(\infty_{\targ})=\infty_{\sou}$.
Let $S$ be a connected Riemann sphere.
The uniqueness of the complex structure on a $2$-dimensional sphere implies that
there exists a biholomorphic isomorphism $\varphi:S\to\cb P^1$.
The standard complex conjugation $\conj$ on $\cb P^1$ induces a real structure on $S$ via $\varphi$.
We use $\conj_S$ to denote this complex conjugation on $S$.

Let $\cl^\rb(\lambda_1,\ldots,\lambda_k)$ denote the set of degree $d$ holomorphic ramified coverings $\pi:S\to\cb P^1$ 
that satisfy conditions $(1)-(4)$ in Definition \ref{def:pHur} and preserve the real structures, \textit{i.e.} $\pi\circ\conj_S=\conj\circ\pi$.
Denote by $\cl^\rb(\lambda_1,\ldots,\lambda_k)/\sim$ the set of real isomorphism classes of real ramified coverings 
$(\pi,\conj_S)$ in $\cl^\rb(\lambda_1,\ldots,\lambda_k)$.
Let $S_{\lambda_1,\ldots,\lambda_k}(w_1,\ldots,w_k)/\sim$ be the set of real affine isomorphism classes of 
real normalized polynomials in $S_{\lambda_1,\ldots,\lambda_k}(w_1,\ldots,w_k)$.

\begin{lemma}\label{lem:corr}
If $d>1$ is an odd integer, we have a bijection 
$$
\Phi_{\lambda_1,\ldots,\lambda_k}:\cl^\rb(\lambda_1,\ldots,\lambda_k)/\sim\to S_{\lambda_1,\ldots,\lambda_k}(w_1,\ldots,w_k).
$$
If $d>0$ is an even integer, there is a bijection
$$
\Phi_{\lambda_1,\ldots,\lambda_k}:\cl^\rb(\lambda_1,\ldots,\lambda_k)/\sim\to S_{\lambda_1,\ldots,\lambda_k}(w_1,\ldots,w_k)/\sim\sqcup S_{\lambda_k,\ldots,\lambda_1}(-w_k,\ldots,-w_1).
$$
\end{lemma}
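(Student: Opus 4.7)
My plan is to construct $\Phi$ by choosing a real coordinate on the source $S$ that sends the total ramification point $\pi^{-1}(\infty)$ to $\infty$, turning $\pi$ into a polynomial. Concretely, given $(\pi,\conj_S)\in\cl^\rb(\lambda_1,\ldots,\lambda_k)$, I pick a real biholomorphism $\varphi:(S,\conj_S)\to(\cb P^1,\conj)$ with $\varphi(\pi^{-1}(\infty))=\infty$, so that $P_\varphi:=\pi\circ\varphi^{-1}$ is a real polynomial of degree $d$ ramified at $w_1,\ldots,w_k$ with profiles $\lambda_1,\ldots,\lambda_k$. Any two such $\varphi$ differ by a real affine automorphism $w\mapsto aw+b$ of $\cb P^1$ fixing $\infty$, so $[P_\varphi]_\rb$ is well-defined; it is also insensitive to replacing $(\pi,\conj_S)$ by a real-isomorphic covering, since any real biholomorphism $\phi:S\to S'$ intertwining the two covers can be absorbed into the choice of $\varphi$.

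I then invoke Lemma \ref{lem:nom-rep} to normalize. For $d$ odd, part (1) gives a unique normalized $Q\in[P_\varphi]_\rb\cap S_{\lambda_1,\ldots,\lambda_k}(w_1,\ldots,w_k)$, and I set $\Phi([(\pi,\conj_S)]):=Q$, with inverse $Q\mapsto[(Q,\conj)]$, regarding $Q$ as a self-map $\cb P^1\to\cb P^1$. For $d$ even, the sign $\sigma$ of the leading coefficient of $P_\varphi$ is invariant under the real affine action (because $\alpha^d>0$ for every $\alpha\in\rb^*$), hence depends only on the iso class of $(\pi,\conj_S)$. If $\sigma>0$, Lemma \ref{lem:nom-rep}(2) produces a real affine class of normalized polynomials in $S_{\lambda_1,\ldots,\lambda_k}(w_1,\ldots,w_k)$, i.e. a point of $S_{\lambda_1,\ldots,\lambda_k}(w_1,\ldots,w_k)/\sim$. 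If $\sigma<0$, then $-P_\varphi$ has positive leading coefficient and ramifies at $-w_k<\cdots<-w_1$ with profiles $\lambda_k,\ldots,\lambda_1$, so Lemma \ref{lem:nom-rep}(3) yields a class in $S_{\lambda_k,\ldots,\lambda_1}(-w_k,\ldots,-w_1)/\sim$, with inverse $[Q]\mapsto[(-Q,\conj)]$ (pointwise negation is a real covering which now ramifies at $w_i$ with profile $\lambda_i$).

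The main obstacle will be verifying that the real affine equivalence $\sim$ on normalized polynomials matches the real isomorphism equivalence on coverings, i.e. that $Q_1\sim Q_2$ iff $(Q_1,\conj)$ and $(Q_2,\conj)$ are real isomorphic in $\cl^\rb$. This rests on the fact that a real biholomorphism of $\cb P^1$ fixing $\infty$ is exactly a real affine map $w\mapsto aw+b$, so real isomorphisms between two polynomial coverings are precisely real affine substitutions in the source variable. The delicate sub-case is when a class in Lemma \ref{lem:nom-rep} contains a single normalized representative $Q$ with $|\aut^\rb(Q)|=2$: the extra affine self-symmetry of $Q$ corresponds to a nontrivial real automorphism of the covering $(Q,\conj)$, so the fiber of $\Phi$ over $[Q]$ is still a single iso class in $\cl^\rb/\sim$, not several. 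Matching stabilizers and orbit sizes across the bijection, and tracking how the reversal $\lambda_1,\ldots,\lambda_k\mapsto\lambda_k,\ldots,\lambda_1$ arises from negation of the branch points in the $\sigma<0$ case, is what makes the proof go through.
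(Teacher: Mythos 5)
Your argument is essentially the paper's own proof: pass from a real covering to a real polynomial function via a real coordinate sending $\pi^{-1}(\infty)$ to $\infty$, identify real isomorphism of coverings with real affine substitution in the source variable, and then normalize via Lemma \ref{lem:nom-rep}, splitting the even-degree case by the sign of the leading coefficient and negating when that sign is negative. The only divergence is that your map lands in $S_{\lambda_k,\ldots,\lambda_1}(-w_k,\ldots,-w_1)/\sim$ for the negative-sign component, whereas the statement (and the paper's proof) write $S_{\lambda_k,\ldots,\lambda_1}(-w_k,\ldots,-w_1)$ without the quotient; since Lemma \ref{lem:nom-rep}(3) allows such a class to contain two normalized representatives, your quotiented target is the internally consistent one, and it agrees with how $\Phi_{\lambda_1,\ldots,\lambda_k}([\pi,\conj_S])$ is actually used in the proof of Theorem \ref{thm:main}, namely as a set of one or two normalized polynomials.
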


\begin{proof}
Let $\pi:(S,\conj_S)\to(\cb P^1,\conj)$ be a degree $d$ real ramified covering in $\cl^\rb(\lambda_1,\ldots,\lambda_k)$.
The real ramified covering $\pi:(S,\conj_S)\to(\cb P^1,\conj)$ is real isomorphic to a real polynomial function
$P:(\cb P^1_{\sou},\conj)\to(\cb P^1_{\targ},\conj)$.
Moreover, two real isomorphic ramified coverings $\pi_1,\pi_2\in\cl^\rb(\lambda_1,\ldots,\lambda_k)$ are real isomorphic to two real polynomial functions $P_1,P_2$ such that $P_1\circ\varphi=P_2$, where $\varphi:(\cb P^1_{\sou},\conj)\to(\cb P^1_{\sou},\conj)$ is a real biholomorphism with $\varphi(\infty_{\sou})=\infty_{\sou}$.
We fix an identification $\cb P^1=\cb\sqcup\{\infty\}$.
The real polynomial functions $P$ are entirely determined by their restriction to the affine plane $P:\cb_{\sou}\to\cb_{\targ}$.
Let $P_{\lambda_1,\ldots,\lambda_k}(w_1,\ldots,w_k)$ denote the set of real polynomials that ramified at $w_1,\ldots,w_k$ with ramification profiles $\lambda_1,\ldots,\lambda_k$, respectively.
The above argument implies that there exists a bijection 
$$
\Psi_{\lambda_1,\ldots,\lambda_k}:\cl^\rb(\lambda_1,\ldots,\lambda_k)/\sim\to P_{\lambda_1,\ldots,\lambda_k}(w_1,\ldots,w_k)/\sim.
$$

From Lemma \ref{lem:nom-rep}, when $d$ is odd, the set $P_{\lambda_1,\ldots,\lambda_k}(w_1,\ldots,w_k)/\sim$
is bijective to the set $S_{\lambda_1,\ldots,\lambda_k}(w_1,\ldots,w_k)$.
We use $\Psi_{\lambda_1,\ldots,\lambda_k}'$ to denote this bijection.
Then $\Phi_{\lambda_1,\ldots,\lambda_k}=\Psi_{\lambda_1,\ldots,\lambda_k}'\circ\Psi_{\lambda_1,\ldots,\lambda_k}$ is the required bijection.
If $d$ is even, the set $P_{\lambda_1,\ldots,\lambda_k}(w_1,\ldots,w_k)/\sim$ consists of two disjoint subsets 
$$
P^+_{\lambda_1,\ldots,\lambda_k}(w_1,\ldots,w_k)/\sim\sqcup P^-_{\lambda_1,\ldots,\lambda_k}(w_1,\ldots,w_k)/\sim,
$$
where $P^{\pm}_{\lambda_1,\ldots,\lambda_k}(w_1,\ldots,w_k)/\sim$ is the subset consisting of real polynomials
whose leading coefficients have the sign $\pm$.
From Lemma \ref{lem:nom-rep}, $P^+_{\lambda_1,\ldots,\lambda_k}(w_1,\ldots,w_k)/\sim$ is bijective to
$S_{\lambda_1,\ldots,\lambda_k}(w_1,\ldots,w_k)/\sim$, and $P^-_{\lambda_1,\ldots,\lambda_k}(w_1,\ldots,w_k)/\sim$
is bijective to $S_{\lambda_k,\ldots,\lambda_1}(-w_k,\ldots,-w_1)$.
We use $\Psi_{\lambda_1,\ldots,\lambda_k}'$ to denote this bijection,
then $\Phi_{\lambda_1,\ldots,\lambda_k}=\Psi_{\lambda_1,\ldots,\lambda_k}'\circ\Psi_{\lambda_1,\ldots,\lambda_k}$ is the required bijection.
\end{proof}

Let $\lambda$ be a partition of an even integer $d$.
We use $o(\lambda)$ to denote the number of integers that appear an odd number of times in $\lambda$.

\begin{lemma}\label{lem:sign}
    Let $d>0$ be an even integer, and let $\lambda_1,\ldots,\lambda_k$ be $k$ partitions of $d$ such that $\sum\limits_{i=1}^kl(\lambda_i)=(k-1)d+1$.
    Suppose that $(\pi,\conj_S)\in\cl^\rb(\lambda_1,\ldots,\lambda_k)$ is a real ramified covering such that $|\Phi_{\lambda_1,\ldots,\lambda_k}([\pi,\conj_S])|=2$.
    Then, we have the following statements.
    \begin{itemize}
        \item If $\sum\limits_{i=1}^k\lfloor\frac{o(\lambda_i)}{2}\rfloor$ is an even number, the sign $\vep(P_1)=\vep(P_2)$, where $P_1$, $P_2$ are the two polynomials in $\Phi_{\lambda_1,\ldots,\lambda_k}([\pi,\conj_S])$.
        \item If $\sum\limits_{i=1}^k\lfloor\frac{o(\lambda_i)}{2}\rfloor$ is an odd number, the sign $\vep(P_1)=-\vep(P_2)$.
    \end{itemize}
\end{lemma}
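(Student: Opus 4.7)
My plan is to use the explicit form of the non-uniqueness of normalization to relate $P_1$ and $P_2$, and then reduce the sign comparison to a parity computation at each branch point.

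First, I would establish that $P_2(z)=P_1(-z)$. From the proof of Lemma~\ref{lem:nom-rep}, whenever a real isomorphism class of real polynomials of even degree contains two normalized representatives, they arise from the two choices of sign in the normalization transformation $z\mapsto\pm(1/|a_d|)^{1/d}z-a_{d-1}/(da_d)$, and therefore differ precisely by $z\mapsto -z$. The same conclusion applies in both the positive and negative leading-coefficient cases appearing in Lemma~\ref{lem:corr}, and the two polynomials share the same critical values with the same ramification profiles.

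Next, I would compare $t(P_1)$ and $t(P_2)$ locally at each real branch point $w$. If $\xi_1<\cdots<\xi_m$ are the real preimages of $w$ under $P_1$ with ramification multiplicities $r_1,\ldots,r_m$, then the real preimages of $w$ under $P_2$, listed in increasing order, are $-\xi_m<\cdots<-\xi_1$ with the reversed multiplicity sequence $r_m,\ldots,r_1$. A disorder of $P_1$ at $w$ is a pair of indices $a<b$ with $r_a>r_b$, while a disorder of $P_2$ at $w$ corresponds to the reversed condition $r_a<r_b$. Consequently, the sum of the two local disorder counts at $w$ equals the number $N(w):=\sum_{j<l}m_j\,m_l$ of unordered pairs of real preimages with distinct multiplicities, where $m_j$ counts real preimages of multiplicity $j$. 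Regular values contribute $N(w)=0$, so only the $k$ real branch points enter.

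Finally, I would express $N(w_i)\bmod 2$ in terms of $o(\lambda_i)$. Since $P_1$ is real, complex preimages of $w_i$ come in conjugate pairs with equal multiplicity, so the total number of parts of $\lambda_i$ equal to $j$ has the same parity as $m_j$; hence $o(\lambda_i)$ equals the number of $j$ for which $m_j$ is odd. Reducing modulo $2$, the double sum collapses to $\binom{o(\lambda_i)}{2}$, and a direct check gives $\binom{n}{2}\equiv\lfloor n/2\rfloor\pmod 2$ for every $n\geq 0$. Since $t(P_1)-t(P_2)\equiv t(P_1)+t(P_2)\pmod 2$, summing over $i$ yields
$$\vep(P_1)=(-1)^{\sum_{i=1}^k\lfloor o(\lambda_i)/2\rfloor}\vep(P_2),$$
which is exactly the dichotomy in the statement. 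The delicate point is the parity bookkeeping in the last step: one must carefully isolate the real-preimage contribution using that complex preimages pair off with equal multiplicity, and then apply the congruence $\binom{n}{2}\equiv\lfloor n/2\rfloor\pmod 2$ to pass from $\binom{o(\lambda_i)}{2}$ to the floor expression appearing in the theorem.
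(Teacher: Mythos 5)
Your proposal is correct and follows essentially the same route as the paper: you identify the two normalized representatives via $z\mapsto -z$, observe that the disorders of $P_1$ and of $P_2$ at each branch point together exhaust the pairs of real preimages with distinct multiplicities, and use the conjugate-pairing of non-real preimages to transfer the parity count from real preimages to the parts of $\lambda_i$. Your closing step via the congruence $\binom{n}{2}\equiv\lfloor n/2\rfloor\pmod 2$ is a slightly cleaner way to finish than the paper's case analysis on the parity of $o(\lambda_i)$, but it is the same computation.
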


\begin{proof}
    Let $P_1$ and $P_2$ be the two real isomorphic normalized polynomials of the class $\Phi_{\lambda_1,\ldots,\lambda_k}([\pi,\conj_S])$.
    Then $P_1=P_2\circ\varphi$, where $\varphi:z\mapsto-z$.
    A pair of real numbers $x_1<x_2$ is called an \textit{ordered pair} of $P_2$ at $w$ if $P_2(x_1)=P_2(x_2)=w$ and the ramification order of $P_2$ at $x_2$ is greater than that at $x_1$.
    Let $\ord(P_2)$ be the total number of ordered pairs of $P_2$.
    Since $P_1=P_2\circ\varphi$, we obtain $t(P_1)=\ord(P_2)$.
    Let $S_{w_i}$, $i=1,\ldots,k$, be the number of disorders and ordered pairs of $P_2$ at the critical value $w_i$.
    In the following, we prove that if $\sum\limits_{i=1}^k\lfloor\frac{o(\lambda_i)}{2}\rfloor$ is an even number,
    then the sum $t(P_2)+\ord(P_2)$ is even. Otherwise, the sum $t(P_2)+\ord(P_2)$ is odd.

    Suppose that the numbers in the set $A=\{a_1,\dots,a_l\}= P_2^{-1}(w_1)\cap\rb$ satisfy $a_1<\cdots<a_l$.
    Denote by $r_i$ the ramification order of $P_2$ at $a_i$, where $i=1,\ldots,l$.
    Suppose that the sequence $R=(r_1,\ldots,r_l)$ contains $n$ distinct integers $t_1,\ldots,t_n$.
    We assume that $t_1,\ldots,t_{o(\lambda_1)}$ are the $o(\lambda_1)$ integers that appear an odd number of times in $\lambda_1$.
    The number of disorders (resp. ordered pairs) of $P_2$ at the critical value $w_1$ is the number of disorders (resp. ordered pairs) in the sequence $R$.
    The number of disorders and ordered pairs in the sequence $R$ 
    that involve the number $t_1$ is $n_{t_1}(R)\cdot(l-n_{t_1}(R))$,
    where $n_{t_1}(R)$ is the number of times $t_1$ appears in the sequence $R$.
    The number of disorders and ordered pairs in the sequence $R$ 
    that involve the number $t_2$ and do not involve $t_1$ is $n_{t_2}(R)\cdot(l-n_{t_1}(R)-n_{t_2}(R))$.
    Repeating this procedure, we find that the number $S_{w_1}$ of disorders and ordered pairs of $P_2$ at the critical value $w_1$ is given by:
    $$
    S_{w_1}=\sum_{i=1}^n\left(n_{t_i}(R)\cdot(l-\sum_{j=1}^{i}n_{t_j}(R))\right).
    $$

    Suppose that $o(\lambda_1)$ is odd. 
    Since the number of elements in the set $P_2^{-1}(w_1)\setminus A$ is even, the number $l$ is odd.
    Note that $n_{r_i}(R)$ is odd for $i\in\{1,\ldots,o(\lambda_1)\}$
    and is even for $i\in\{o(\lambda_1)+1,\ldots,l\}$.
    The number $l-\sum\limits_{j=1}^in_{r_j}(R)$ is even if $i$ is an odd number in the set $\{1,\ldots,o(\lambda_1)\}$ and is odd if $i$ is even in $\{1,\ldots,o(\lambda_1)\}$.
    Therefore, the sum $S_{w_1}$ is even (resp. odd) if the number of even integers in the set $\{1,\ldots,o(\lambda_1)\}$ is even (resp. odd), that is, $\lfloor\frac{o(\lambda_1)}{2}\rfloor$ is even (resp. odd).
    When $o(\lambda_1)$ is even, we obtain that $S_{w_1}$ is even (resp. odd) if $\frac{o(\lambda_1)}{2}$ is even (resp. odd) via an argument similar to the above.
    In summary, $S_{w_1}$ and $\lfloor\frac{o(\lambda_1)}{2}\rfloor$ have the same parity. Of course, the same conclusion holds for $S_{w_i}$ and $\lfloor\frac{o(\lambda_i)}{2}\rfloor$, where $i=2,\ldots,k$.
    Note that
    $$
    t(P_2)+\ord(P_2)=S_{w_1}+\cdots+S_{w_k}.
    $$
    Therefore, $t(P_2)+\ord(P_2)$ has the same parity as $\sum\limits_{i=1}^k\lfloor\frac{o(\lambda_i)}{2}\rfloor$.
\end{proof}

\begin{definition}\label{def:real-sign}
Let $[\pi,\conj_S]$ be a real isomorphism class of a degree $d$ ramified covering in $\cl^\rb(\lambda_1,\ldots,\lambda_k)$.
\begin{itemize}
    \item If $d$ is odd, 
    the \textit{sign} $\vep([\pi,\conj_S])$ of $[\pi,\conj_S]$ is defined as $\vep(P)$, 
    where $P$ is the normalized real polynomial in the class $\Phi_{\lambda_1,\ldots,\lambda_k}([\pi,\conj_S])$.
    \item If $d$ and $\sum\limits_{i=1}^k\lfloor\frac{o(\lambda_i)}{2}\rfloor$ are all even, 
    the \textit{sign} $\vep([\pi,\conj_S])$ of $[\pi,\conj_S]$ is defined as $\vep(P)$, where $P$ is a real normalized polynomial in the class $\Phi_{\lambda_1,\ldots,\lambda_k}([\pi,\conj_S])$.
\end{itemize}
\end{definition}

From Lemma \ref{lem:corr} and Lemma \ref{lem:sign}, the sign of a real isomorphism class $[\pi,\conj_S]$ is well defined.

\begin{definition}\label{def:real-pHur}
Let $k,d$ be two positive integers with $k<d$.
Choose a sequence of partitions $\lambda_1,\ldots,\lambda_k$ of $d$ such that
$\sum\limits_{i=1}^kl(\lambda_i)=(k-1)d+1$.
\begin{itemize}
    \item If $d$ is odd or $d$ and $\sum\limits_{i=1}^k\lfloor\frac{o(\lambda_i)}{2}\rfloor$ are even,
    the \textit{real polynomial Hurwitz numbers} are defined as
    \begin{equation}\label{eq:real-pHur}    H^\rb(\lambda_1,\ldots,\lambda_k):=\sum_{[\pi,\conj_S]}\frac{\vep([\pi,\conj_S])}{|\aut^\rb(\pi,\conj_S)|}.
    \end{equation}
    Here, the sum is taken over real isomorphism classes in $\cl^\rb(\lambda_1,\ldots,\lambda_k)/\sim$,
    and $\aut^\rb(\pi,\conj_S)$ is the group of real automorphisms of $(\pi,\conj_S)$.
    \item If $d$ is even and $\sum\limits_{i=1}^k\lfloor\frac{o(\lambda_i)}{2}\rfloor$ is odd,
    the real polynomial Hurwitz numbers are defined as
    $$
    H^\rb(\lambda_1,\ldots,\lambda_k)=0.
    $$
\end{itemize}
\end{definition}

\begin{theorem}\label{thm:main}
Let $k,d$ be two positive integers with $k<d$. Choose a sequence of pairwise distinct real numbers $w_1,\ldots,w_k$ that satisfy $w_1<\ldots<w_k$.
Let $\lambda_1,\ldots,\lambda_k$ be a sequence of partitions of $d$ such that
$\sum\limits_{i=1}^kl(\lambda_i)=(k-1)d+1$.
Then, we have
$$
H^\rb(\lambda_1,\ldots,\lambda_k)=s(\lambda_1,\ldots,\lambda_k).
$$
\end{theorem}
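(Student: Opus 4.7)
The plan is to use the bijection $\Phi_{\lambda_1,\ldots,\lambda_k}$ of Lemma \ref{lem:corr} to translate the sum defining $H^\rb(\lambda_1,\ldots,\lambda_k)$ into a sum of signs of normalized real polynomials, and then to identify it with $s(\lambda_1,\ldots,\lambda_k)$. The argument splits into three cases. When $d$ is odd the statement is essentially immediate: Lemma \ref{lem:corr} yields a bijection $\cl^\rb/\sim\to S_{\lambda_1,\ldots,\lambda_k}(w_1,\ldots,w_k)$, Lemma \ref{lem:nom-rep}(1) forces $|\aut^\rb(\pi,\conj_S)|=1$ throughout, and Definition \ref{def:real-sign} equates the sign of a real isomorphism class with the sign of its associated normalized polynomial, so the two sums agree term by term.

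When $d$ is even and $\sum_{i}\lfloor o(\lambda_i)/2\rfloor$ is even, I would decompose $\cl^\rb/\sim$ along the two pieces on the right-hand side of Lemma \ref{lem:corr}. On the positive leading coefficient piece, a class $[\pi,\conj_S]$ maps to a class in $S_{\lambda_1,\ldots,\lambda_k}(w_1,\ldots,w_k)/\sim$, and Lemma \ref{lem:nom-rep}(2) gives a dichotomy: either the class consists of a single normalized polynomial $Q$ with $Q(-x)=Q(x)$ and $|\aut^\rb(Q)|=|\aut^\rb(\pi,\conj_S)|=2$, contributing $\vep(Q)/2$, or it consists of two distinct polynomials $Q_1,Q_2$ with $Q_2(x)=Q_1(-x)$ and trivial $\aut^\rb$, contributing $\vep(Q_1)=\vep(Q_2)$ (the equality from Lemma \ref{lem:sign} in the even-parity case). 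A direct averaging yields $\frac{1}{2}\sum_{Q\in S_{\lambda_1,\ldots,\lambda_k}(w_1,\ldots,w_k)}\vep(Q)=\frac{1}{2} s(\lambda_1,\ldots,\lambda_k)$. The analogous analysis on the negative leading coefficient piece, using Lemma \ref{lem:nom-rep}(3) and identifying $|\aut^\rb(\pi,\conj_S)|$ with $|\aut^\rb(Q)|$ of the normalized form of $-P$, contributes $\frac{1}{2} s(\lambda_k,\ldots,\lambda_1)$. Since \cite[Theorem 1]{iz-2018} implies that $s$ is symmetric in its arguments (sliding the $w_i$ realises arbitrary transpositions of the labels), we have $s(\lambda_k,\ldots,\lambda_1)=s(\lambda_1,\ldots,\lambda_k)$, and the two halves combine to the desired total.

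When $d$ is even and $\sum_i\lfloor o(\lambda_i)/2\rfloor$ is odd, $H^\rb$ vanishes by definition and the task is to prove $s(\lambda_1,\ldots,\lambda_k)=0$. The natural involution $\iota(Q)=Q(-x)$ on $S_{\lambda_1,\ldots,\lambda_k}(w_1,\ldots,w_k)$ preserves normalization, critical values and ramification profiles, and Lemma \ref{lem:sign} shows that it is sign-reversing on every orbit of size two, so it suffices to rule out fixed points. A fixed point would be an even polynomial $Q(x)=R(x^2)$; then $Q'(x)=2xR'(x^2)$, so the origin is a critical point of even ramification order (namely $2a$ if $R(y)-R(0)$ vanishes to order $a$ at $y=0$), while every other critical point of $Q$ comes in a symmetric $\pm$-pair with identical ramification order and critical value. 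Consequently each $\lambda_i$ has $o(\lambda_i)\in\{0,1\}$: only the part coming from $x=0$ can appear with odd multiplicity, and only in the unique $\lambda_{i_0}$ with $w_{i_0}=Q(0)$. Hence $\sum_i\lfloor o(\lambda_i)/2\rfloor=0$, contradicting the parity hypothesis. Therefore $\iota$ is fixed-point free and pairs up $S_{\lambda_1,\ldots,\lambda_k}(w_1,\ldots,w_k)$ into sign-cancelling orbits, giving $s=0$.

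The main obstacle is the bookkeeping in the even-parity case: carefully matching $|\aut^\rb(\pi,\conj_S)|$ with the fibre size of $\Phi$ and with $|\aut^\rb(Q)|$ on both the positive and negative leading coefficient pieces, and then legitimately invoking the permutation symmetry of $s$ to combine the two halves into $s(\lambda_1,\ldots,\lambda_k)$.
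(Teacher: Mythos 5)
Your proof is correct, and in the odd-degree case and in the even-degree case with $\sum_i\lfloor o(\lambda_i)/2\rfloor$ even it follows the paper's argument essentially verbatim: translate via $\Phi_{\lambda_1,\ldots,\lambda_k}$, match fibre sizes against $|\aut^\rb(\pi,\conj_S)|$ using Lemma \ref{lem:nom-rep}, average signs over each fibre using Lemma \ref{lem:sign}, and combine the two leading-coefficient pieces into $\frac{1}{2}\left(s(\lambda_1,\ldots,\lambda_k)+s(\lambda_k,\ldots,\lambda_1)\right)$ before invoking the order-independence of the $s$-numbers from \cite[Theorem 1]{iz-2018}. Where you genuinely diverge is the case $d$ even with $\sum_i\lfloor o(\lambda_i)/2\rfloor$ odd, where one must show $s(\lambda_1,\ldots,\lambda_k)=0$. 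The paper deduces this from the vanishing criterion of \cite[Theorem 3]{iz-2018}: it notes that some $\lambda_i$ has $o(\lambda_i)\ge 2$, passes to the partition $\lambda_i'$ obtained by subtracting $1$ from every part, and checks that $\lambda_i'$ violates that theorem's parity conditions. You instead run the involution $Q\mapsto Q(-x)$ directly on $S_{\lambda_1,\ldots,\lambda_k}(w_1,\ldots,w_k)$, using Lemma \ref{lem:sign} for sign reversal on free orbits, and rule out fixed points by observing that an even polynomial $Q(x)=R(x^2)$ forces $o(\lambda_i)\le 1$ for every $i$ (all preimages of each $w_i$ other than $x=0$ pair up under negation or conjugation), so $\sum_i\lfloor o(\lambda_i)/2\rfloor=0$, contradicting oddness. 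Your route is self-contained and in effect re-proves the relevant special case of \cite[Theorem 3]{iz-2018} rather than citing it; the paper's route is shorter but leans on that external vanishing theorem. Both are valid.
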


\begin{proof}
If $d$ is odd,from Lemma \ref{lem:corr} and Lemma \ref{lem:nom-rep}(1), 
$|\Phi([\pi,\conj_S])|=1$, $|\aut^\rb(\pi,\conj_S)|=1$ and 
$\Phi_{\lambda_1,\ldots,\lambda_k}(\cl^\rb(\lambda_1,\ldots,\lambda_k)/\sim)=S_{\lambda_1,\ldots,\lambda_k}(w_1,\ldots,w_k)$.
Hence, we have 
$$
H^\rb(\lambda_1,\ldots,\lambda_k)=s(\lambda_1,\ldots,\lambda_k).
$$

If $d$ is even and $\sum\limits_{i=1}^k\lfloor\frac{o(\lambda_i)}{2}\rfloor$ is odd, 
there exists at least one $\lambda_i$ in the $k$ partitions $\lambda_1,\ldots,\lambda_k$ such that $o(\lambda_i)\geq2$.
Suppose that $t_1,\ldots,t_{o(\lambda_i)}$ are the $o(\lambda_i)$ numbers that appear an odd number of times in $\lambda_i$.
Since $d$ is even, the number of odd integers, denoted by $c$,
in the set $\{t_1,\ldots,t_{o(\lambda_i)}\}$ is even.
If $c=0$, there are at least two even integers that appear an odd number of times in $\lambda_i$.
Let $\lambda_i'$ be the partition obtained from $\lambda_i$ by subtracting 1 from every element of $\lambda_i$ and eliminating the zeros.
There are at least two odd numbers that appear an odd number of times in $\lambda_i'$.
From \cite[Theorem 3]{iz-2018}, $s(\lambda_1,\ldots,\lambda_k)=0$.
If $c$ is a positive even number, there is at least one even number that appears an odd number of times in $\lambda_i'$.
We have $s(\lambda_1,\ldots,\lambda_k)=0$ from \cite[Theorem 3]{iz-2018}.
Therefore, we obtain
$$
H^\rb(\lambda_1,\ldots,\lambda_k)=s(\lambda_1,\ldots,\lambda_k)=0.
$$

If $d$ and $\sum\limits_{i=1}^k\lfloor\frac{o(\lambda_i)}{2}\rfloor$ are both even, Lemma \ref{lem:corr} and Lemma \ref{lem:nom-rep} imply that:
\begin{itemize}
    \item when $|\Phi([\pi,\conj_S])|=2$ the number $|\aut^\rb(\pi,\conj_S)|$ is equal to 1;
    \item when $|\Phi([\pi,\conj_S])|=1$ the number $|\aut^\rb(\pi,\conj_S)|$ is equal to 2.
\end{itemize}
In the case $|\Phi([\pi,\conj_S])|=2$, we suppose that $P_1$, $P_2$ are the two real normalized polynomials.
From Lemma \ref{lem:sign}, the sign $\vep(P_1)=\vep(P_2)$,
so $\vep([\pi,\conj_S])=\frac{1}{2}(\vep(P_1)+\vep(P_2))$.
Hence, for any real isomorphism class $[\pi,\conj_S]$ we get
\begin{equation}\label{eq:av-sign}
    \vep([\pi,\conj_S])=\frac{1}{|\Phi_{\lambda_1,\ldots,\lambda_k}([\pi,\conj_S])|}\sum_{P\in\Phi_{\lambda_1,\ldots,\lambda_k}([\pi,\conj_S])}\vep(P).
\end{equation}
From the definition of real polynomial Hurwitz number we have 
\begin{align*}
H^\rb(\lambda_1,\ldots,\lambda_k)&=\frac{1}{2}\sum_{[\pi,\conj_S]}\sum_{P\in\Phi([\pi,\conj_S])}\vep(P)\\
&=\frac{1}{2}\left(\sum_{P\in S_{\lambda_1,\ldots,\lambda_k}(w_1,\ldots,w_k)}\vep(P)+\sum_{P\in S_{\lambda_k,\ldots,\lambda_1}(-w_k,\ldots,-w_1)}\vep(P)\right)\\
&=\frac{1}{2}\left(s(\lambda_1,\ldots,\lambda_k)+s(\lambda_k,\ldots,\lambda_1)\right).
\end{align*}
From \cite[Theorem 1]{iz-2018}, we get $H^\rb(\lambda_1,\ldots,\lambda_k)=s(\lambda_1,\ldots,\lambda_k)$.
\end{proof}

From Theorem \ref{thm:main} and \cite[Theorem 1]{iz-2018}, one finds that the signed counts
$H^\rb(\lambda_1,\ldots,\lambda_k)$ do not depend on the real numbers $w_1,\ldots,w_k$
(in particular, do not depend on the order of the numbers $w_1,\ldots,w_k$),
but only on the partitions $\lambda_1,\ldots,\lambda_k$.

\begin{remark}
It is interesting to consider the signed counts of the real ramified coverings corresponding
to the signed counts of the real simple rational functions, as defined by El Hilany and Rau in \cite{er-2019}. 
\end{remark}

\begin{remark}
When $d$ is even, from the proof of Theorem \ref{thm:main}, the sign of a real isomorphism class can alternatively be defined by equation (\ref{eq:av-sign}).
If $\sum\limits_{i=1}^k\lfloor\frac{o(\lambda_i)}{2}\rfloor$ is odd and $|\Phi_{\lambda_1,\ldots,\lambda_k}([\pi,\conj_S])|=2$,
equation (\ref{eq:av-sign}) implies the sign $\vep([\pi,\conj_S])=0$.
The proof of Theorem \ref{thm:main} implies that if we use equation (\ref{eq:av-sign}) as the definition of the sign of a real isomorphism class, the number $H^\rb(\lambda_1,\ldots,\lambda_k)$ is also equal to $s(\lambda_1,\ldots,\lambda_k)$.
This definition has no restriction on the parity of the sum $\sum\limits_{i=1}^k\lfloor\frac{o(\lambda_i)}{2}\rfloor$, but allows some real isomorphism classes to be counted with zero weight.
From \cite[Theorem 3]{iz-2018}, if $s(\lambda_1,\ldots,\lambda_k)\neq0$,
the partitions $\lambda_1,\ldots,\lambda_k$ have to satisfy the condition that $\sum\limits_{i=1}^k\lfloor\frac{o(\lambda_i)}{2}\rfloor$ is even.
Hence, we define the sign of a real isomorphism class as Definition \ref{def:real-sign}.
\end{remark}

\subsection{One-part real double Hurwitz numbers}
From Theorem \ref{thm:main} and \cite[Theorem 2-5]{iz-2018}, it is obvious that $H^\rb(\lambda_1,\ldots,\lambda_k)$ satisfies the same properties as the $s$-numbers.
In this section, we restrict our attention to genus zero one-part real double Hurwitz numbers,
where $\lambda_1$ is a partition of $d$ and $\lambda_2,\ldots,\lambda_k$ are equal to $(2,1,\ldots,1)$.

The signed counting defined genus zero one-part real double Hurwitz number is defined as
$$
H^\rb_0(\lambda):=H^\rb(\lambda,(2,1,\ldots,1),\ldots,(2,1,\ldots,1)).
$$
The following generating series is defined in \cite[Section 1.3]{iz-2018}.
The generating series for genus zero one-part real double Hurwitz numbers is given by:
$$
F_{\lambda}(q):=\sum_{m\geq 0}h^\rb_\lambda(m)\frac{q^m}{m!},
$$
where $h^\rb_\lambda(m)=H^\rb_0(\lambda\cup(1^m))$.
The notation $\lambda\cup(1^m)$ denotes the partition obtained by adding $m$ ones to the partition $\lambda$.
The generating series $F_\lambda(q)$ is decomposed into an even part and an odd part.
The even part $F^{\even}_\lambda(q)$ (resp. odd part $F^{\odd}_\lambda(q)$) enumerates real ramified coverings of even (resp. odd) degrees.
More precisely, we have
$$
F^{\even}_\lambda(q)=\sum_{\substack{m\geq0 \\m+|\lambda| \text{ is even}}}h^\rb_\lambda(m)\frac{q^m}{m!},~
F^{\odd}_\lambda(q)=\sum_{\substack{m\geq0 \\m+|\lambda| \text{ is odd}}}h^\rb_\lambda(m)\frac{q^m}{m!}.
$$

\begin{corollary}\label{cor:gen-series}
For any partition $\lambda$, we have the following.
\begin{enumerate}
\item The generating series $F_\lambda^{\even}(q)$ is a polynomial in $q$ and $f(q)$ with rational coefficients,
where $f(q)=\tanh(q)=\frac{e^q-e^{-q}}{e^q+e^{-q}}$.
\item The generating series $F_\lambda^{\odd}(q)$ is equal to $g(q)$ multiplied by a polynomial in $q$ and $f(q)$ with rational coefficients,
where $g(q)=\frac{1}{\cosh(q)}=\frac{2}{e^q+e^{-q}}$.
\end{enumerate}
\end{corollary}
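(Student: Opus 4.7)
The plan is to transfer Itenberg--Zvonkine's polynomiality results for the $s$-number generating series directly via the identification $H^\rb=s$ supplied by Theorem \ref{thm:main}.

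The first step is to unpack definitions. By Theorem \ref{thm:main},
$$
h^\rb_\lambda(m)=H^\rb_0(\lambda\cup(1^m))=s\bigl(\lambda\cup(1^m),(2,1^{d-2}),\ldots,(2,1^{d-2})\bigr),
$$
where $d=|\lambda|+m$ and the number $k-1$ of simple ramification profiles is pinned down by the length constraint $\sum_i l(\lambda_i)=(k-1)d+1$. Consequently, $F_\lambda(q)$ agrees term-by-term with the generating series for the corresponding one-part $s$-numbers introduced in \cite[Section 1.3]{iz-2018}, and the parity split $F_\lambda^{\even},F_\lambda^{\odd}$ matches the parity of the covering degree $d=m+|\lambda|$. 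This identification is honest even in the degenerate cases: whenever Definition \ref{def:real-pHur} forces $H^\rb=0$, the structural vanishing \cite[Theorem 3]{iz-2018} already forces $s=0$ as well, exactly as exploited in the proof of Theorem \ref{thm:main}.

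The second step is to invoke the polynomiality theorem for the $s$-number generating series stated as \cite[Theorems 2--5]{iz-2018}, which asserts precisely that (1) the even-degree part lies in $\qb[q,\tanh q]$ and (2) the odd-degree part equals $\operatorname{sech}(q)$ times an element of $\qb[q,\tanh q]$. Combined with the term-by-term identification from the first step, this yields both assertions of the corollary at once.

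The main obstacle, such as it is, is purely clerical: one must check that the conventions for indexing the simple ramification profiles, the $q^m/m!$ normalization, and the even/odd decomposition all agree with those in \cite{iz-2018}. Since the identification $H^\rb=s$ is exact and Itenberg--Zvonkine's result is already stated in the required $\tanh$/$\operatorname{sech}$ shape, no additional combinatorial input is required beyond Theorem \ref{thm:main}.
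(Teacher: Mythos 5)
Your proposal is correct and follows the same route as the paper, which simply combines Theorem \ref{thm:main} with the polynomiality of the $s$-number generating series in \cite[Theorem 2]{iz-2018}; your additional bookkeeping about conventions is harmless elaboration of the same one-line argument.
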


\begin{proof}
It is straightforward to conclude the above statements from Theorem \ref{thm:main} and \cite[Theorem 2]{iz-2018}.
\end{proof}

\begin{remark}
    The polynomiality of the generating series $F_\lambda^{\even}(q)$ and $F_\lambda^{\odd}(q)$ is
    different from the polynomiality of the complex genus zero one-part double Hurwitz numbers in \cite{gjv-2005}.
\end{remark}

\begin{corollary}\label{cor:asymptotics}
Let $\lambda$ be a partition such that every odd number greater than $1$ appears an even number of times
and at most one even number appears an odd number of times.
Then, we have
\begin{align*}
\ln|h_\lambda^\rb(m)|\underset{\substack{m\to\infty \\m\text{ even}}}{\sim}&m\ln m~\text{for }|\lambda|\text{ even},\\
\ln|h_\lambda^\rb(m)|\underset{\substack{m\to\infty \\m\text{ odd}}}{\sim}&m\ln m~\text{for }|\lambda|\text{ odd}.
\end{align*}
Let $\lambda$ be a partition such that at most one even number appears an odd number of times
and at most one odd number greater than $1$ appears an odd number of times.
Then, we have
\begin{align*}
\ln|h_\lambda^\rb(m)|\underset{\substack{m\to\infty \\m\text{ even}}}{\sim}&m\ln m~\text{for }|\lambda|\text{ odd},\\
\ln|h_\lambda^\rb(m)|\underset{\substack{m\to\infty \\m\text{ odd}}}{\sim}&m\ln m~\text{for }|\lambda|\text{ even}.
\end{align*}
\end{corollary}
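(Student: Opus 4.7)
The plan is to combine Theorem~\ref{thm:main} with singularity analysis of the generating series of Corollary~\ref{cor:gen-series}. By Theorem~\ref{thm:main}, $h_\lambda^\rb(m) = s(\lambda\cup(1^m),(2,1^{d-2}),\ldots,(2,1^{d-2}))$, so it suffices to estimate the $s$-number sequence along the appropriate parity subsequence of $m$.

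Next, by Corollary~\ref{cor:gen-series}, $F_\lambda^{\even}(q) \in \qb[q,f(q)]$ and $F_\lambda^{\odd}(q) \in g(q)\cdot\qb[q,f(q)]$, with $f(q)=\tanh q$ and $g(q)=1/\cosh q$. These are meromorphic functions on $\cb$ whose singularities closest to the origin lie at $q=\pm i\pi/2$, so the generating series has radius of convergence exactly $\pi/2$. Cauchy's inequality immediately gives
$$
|h_\lambda^\rb(m)| = m!\cdot|[q^m]F_\lambda(q)| \llt C\cdot m!\cdot (2/\pi)^m,
$$
and Stirling then yields the upper bound $\ln|h_\lambda^\rb(m)| \llt m\ln m + O(m)$. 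For the matching lower bound I would do a partial-fraction expansion in the variable $e^{2q}$ (permissible because $\tanh q$ and $1/\cosh q$ are rational in $e^{2q}$): the dominant contribution from the pole at $q=i\pi/2$ produces a term of the form $C(2/\pi)^m m^{-\alpha}$ for some integer $\alpha\geq 0$, giving $|h_\lambda^\rb(m)| \gtrsim m!\cdot (2/\pi)^m/m^\alpha$ and hence $\ln|h_\lambda^\rb(m)| \glt m\ln m - O(m)$. Together this gives the stated equivalence $\ln|h_\lambda^\rb(m)|\sim m\ln m$.

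The main obstacle will be verifying non-triviality of the leading singular term. The parity hypotheses on $\lambda$ in the two cases of the corollary are chosen precisely so that the vanishing criterion from the proof of Theorem~\ref{thm:main} (namely $\sum_i\lfloor o(\lambda_i)/2\rfloor$ odd when $d$ is even) does not annihilate $h_\lambda^\rb(m)$ along the specified parity subsequence of $m$. To rule out accidental cancellation in the polynomial expression of $F_\lambda$, one identifies the top-degree-in-$f$ monomial of $F_\lambda^{\even}$ (respectively $F_\lambda^{\odd}$), tracks its coefficient using the explicit description in \cite[Theorem 2]{iz-2018}, and checks that this coefficient is non-zero so that a genuine pole at $q=i\pi/2$ survives. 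Once this combinatorial input is in place, the singularity analysis above closes the proof and gives the four asymptotic regimes depending on the parities of $m$ and $|\lambda|$.
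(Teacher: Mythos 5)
Your first step---rewriting $h_\lambda^\rb(m)$ as an $s$-number via Theorem~\ref{thm:main}---is exactly what the paper does; from there, however, the paper simply quotes \cite[Theorem 5]{iz-2018}, which states precisely these logarithmic asymptotics for the $s$-numbers under precisely these hypotheses on $\lambda$. You instead try to rederive that statement from the polynomiality of the generating series (Corollary~\ref{cor:gen-series}). The analytic half of your argument is sound: the upper bound $\ln|h_\lambda^\rb(m)|\llt m\ln m+O(m)$ follows from Cauchy's estimate on any circle of radius less than $\pi/2$, and if $F_\lambda^{\even}$ (resp.\ $F_\lambda^{\odd}$) genuinely has a pole at $q=\pm i\pi/2$, then---because these series contain only powers $q^m$ with $m$ of a single fixed parity---the contributions of the two conjugate poles reinforce rather than cancel along that parity class, giving the matching lower bound. (A minor slip: a pole of order $k$ contributes growth $m^{k-1}(2/\pi)^m$ to the $m$-th Taylor coefficient, not the decay $m^{-\alpha}$ you wrote, but this does not affect the logarithmic asymptotics.)

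The genuine gap is the step you yourself flag as ``the main obstacle'' and then do not carry out: proving that the leading singular coefficient is non-zero, i.e.\ that $F_\lambda^{\even}$ is not merely a polynomial in $q$, and that the polynomial multiplying $g(q)$ in $F_\lambda^{\odd}$ does not vanish. Without this, the lower bound collapses: if $F_\lambda^{\even}$ were a polynomial in $q$ alone, then $h_\lambda^\rb(m)$ would vanish for all large $m$ of the relevant parity and the claimed asymptotics would be false. This non-vanishing is exactly where the parity hypotheses on the multiplicities in $\lambda$ must enter (they are not merely there to dodge the vanishing criterion of \cite[Theorem 3]{iz-2018}), and extracting it from the explicit formulas of \cite[Theorem 2]{iz-2018} is the real combinatorial content of \cite[Theorem 5]{iz-2018}; it is not a routine check, and each of the four parity regimes must be treated separately. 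As written, your proposal reduces the corollary to an unproved non-vanishing claim that is essentially equivalent to the theorem the paper cites, so the proof is not complete.
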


\begin{proof}
It is straightforward to obtain the above logarithmic asymptotics from Theorem \ref{thm:main} and \cite[Theorem 5]{iz-2018}.
\end{proof}
From \cite[Theorem 5.10]{rau2019}, the number $h_\lambda^\rb(m)$ is logarithmically equivalent to its complex counterpart.

\begin{remark}
Definition \ref{def:real-pHur} offers a signed counting method to define genus zero one-part real double Hurwitz numbers.
In a subsequent paper, we will explore extending this method to define ordinary genus zero real double Hurwitz numbers.
\end{remark}

\section*{Acknowledgements}
The author thanks Cong Mo and Xinyu Liu for their participation in the early stage of this project.
This work was partially supported by the National Natural Science Foundation of China (No.12101565).


\end{document}